\documentclass[11pt]{article}
\usepackage{amsmath,amsthm,amssymb}
\usepackage{fancyhdr}
\usepackage[british]{babel}
\usepackage{geometry,mathtools}
\usepackage{enumitem}
\usepackage{algpseudocode}
\usepackage{dsfont}
\usepackage{centernot}
\usepackage{xstring}
\usepackage{colortbl}
\usepackage{setspace}
\usepackage{algorithm}
\usepackage{graphicx}
\usepackage[font={footnotesize}]{caption}
\usepackage[usenames,dvipsnames,table]{xcolor}
\usepackage{pstricks,tikz}
\usepackage[h]{esvect}
\usepackage[
		bookmarksopen=true,
		bookmarksopenlevel=1,
		colorlinks=true,
		linkcolor=darkblue,
        linktoc=page,
		citecolor=darkblue,
]{hyperref}
\usepackage{bbm}
\usepackage{hyperref}
\usepackage{cleveref}
\usepackage[square,sort,comma,numbers]{natbib}
\setstretch{1.07}
\setlength{\bibsep}{2.5pt plus 0.2ex}

\geometry{a4paper,tmargin=2.2 cm,bmargin=2.2 cm,lmargin=2.3cm,rmargin=2.3cm}


\hyphenation{quasi-random sub-graph di-graph mul-ti-graph pro-ba-bi-lis-tic ver-ti-ces}


\definecolor{darkblue}{rgb}{0,0,0.5}

\newdimen\margin
\def\textno#1&#2\par{
   \margin=\hsize
   \advance\margin by -4\parindent
          \setbox1=\hbox{\sl#1}
   \ifdim\wd1 < \margin
      $$\box1\eqno#2$$
   \else
      \bigbreak
      \hbox to \hsize{\indent$\vcenter{\advance\hsize by -3\parindent
      \it\noindent#1}\hfil#2$}
      \bigbreak
   \fi}

\setlength{\parskip}{\smallskipamount}


\newtheorem*{claim*}{Claim}
\newtheorem{theorem}[algorithm]{Theorem}
\newtheorem{prop}[algorithm]{Proposition}
\newtheorem{lemma}[algorithm]{Lemma}

\theoremstyle{definition}

\newtheorem{conj}[algorithm]{Conjecture}

\crefname{claim}{Claim}{Claims}
\Crefname{claim}{Claim}{Claims}



\newcounter{stepenv}
\newenvironment{stepenv}[1][]{\refstepcounter{stepenv}}{}

\newcounter{step}[stepenv]

\newcounter{substep}[step]
\renewcommand{\thesubstep}{\thestep.\arabic{substep}}

\newcounter{claim}[stepenv]


\newcommand{\cA}{\mathcal{A}}
\newcommand{\cB}{\mathcal{B}}

\newcommand{\cF}{\mathcal{F}}


\newcommand{\defn}{\emph}

\def\sm{\setminus}
\newcommand{\Set}[1]{\{#1\}}

\def\In{\subseteq}


\def\COMMENT#1{}
\def\TASK#1{}
\let\TASK=\footnote             
\let\COMMENT=\footnote          


\begin{document}

\title{\vspace{-0.9cm}The intersection spectrum of $3$-chromatic intersecting hypergraphs}

\author{Matija Buci\'c \thanks{Department of Mathematics, ETH Z\"urich, Switzerland. Email: \href{mailto:matija.bucic@math.ethz.ch} {\nolinkurl{matija.bucic@math.ethz.ch}}.}
\and
Stefan Glock \thanks{Institute for Theoretical Studies, ETH Z\"urich, Switzerland.
Email: \href{mailto:dr.stefan.glock@gmail.com}{\nolinkurl{dr.stefan.glock@gmail.com}}.
Research supported by Dr. Max R\"ossler, the Walter Haefner Foundation and the ETH Z\"urich Foundation.}
\and Benny Sudakov \thanks{Department of Mathematics, ETH Z\"urich, Switzerland. Email:
\href{mailto:benny.sudakov@gmail.com} {\nolinkurl{benny.sudakov@gmail.com}}.
Research supported in part by SNSF grant 200021\_196965.}
}

\date{}

\maketitle

\begin{abstract}
For a hypergraph $H$, define its intersection spectrum $I(H)$ as the set of all intersection sizes $|E\cap F|$ of distinct edges $E,F\in E(H)$.
In their seminal paper from 1973 which introduced the local lemma, Erd\H{o}s and Lov\'asz asked: how large must the intersection spectrum of a $k$-uniform $3$-chromatic intersecting hypergraph be? They showed that such a hypergraph must have at least three intersection sizes, and conjectured that the size of the intersection spectrum tends to infinity with~$k$. Despite the problem being reiterated several times over the years by Erd\H{o}s and other researchers, the lower bound of three intersection sizes has remarkably withstood any improvement until now. 
In this paper, we prove the Erd\H{o}s--Lov\'asz conjecture in a strong form by showing that there are at least $k^{1/2-o(1)}$ intersection sizes.
Our proof consists of a delicate interplay between Ramsey type arguments and a density increment approach.


\end{abstract}

\section{Introduction}

A family $\cF$ of sets is said to have \defn{property B} if there exists a set $X$ which properly intersects every set of the family, that is, $\emptyset\neq F\cap X\neq F$ for all $F\in \cF$. The term was coined in the 1930s by Miller~\cite{miller:37a,miller:37b} in honor of Felix Bernstein. In 1908, Bernstein~\cite{bernstein:1908} proved that for any transfinite cardinal number $\kappa$, any family $\cF$ of cardinality at most $\kappa$, whose sets have cardinality at least $\kappa$, has property~B. 
In the 60s, Erd\H{o}s and Hajnal~\cite{EH:61} revived the study of property~B, and initiated its investigation for finite set systems, or hypergraphs. A hypergraph $H$ consists of a \defn{vertex set} $V(H)$ and an \defn{edge set} $E(H)$, where every edge is a subset of the vertex set. As usual, $H$ is \defn{$k$-uniform} if every edge has size~$k$. A hypergraph is \defn{$r$-colorable} if its vertices can be colored with $r$ colors such that no edge is monochromatic.
Note that a hypergraph has property B if and only if it is $2$-colorable. 

The famous problem of Erd\H{o}s and Hajnal is to determine $m(k)$, the minimum number of edges in a $k$-uniform hypergraph which is not $2$-colorable. This can be viewed as an analogue of Bernstein's result for finite cardinals.
Clearly, one has $m(k)\le \binom{2k-1}{k}$, since the family of all $k$-subsets of a given set of size $2k-1$ does not have property~B. On the other hand, Erd\H{o}s~\cite{erdos:63} soon observed that $m(k)\ge 2^{k-1}$. Indeed, if a hypergraph has less than $2^{k-1}$ edges, then the expected number of monochromatic edges in a random $2$-coloring is less than $1$, hence a proper $2$-coloring exists. Thanks to the effort of many researchers~\cite{schmidt:64,AM:64,erdos:64c,beck:77,beck:78,RS:00,CK:15,pluhar:09,spencer:81,gebauer:13}, the best known bounds are now
\begin{align}
	 \Omega\left(\sqrt{k/\log k}\right)  \le {m(k)}/{ 2^{k}} \le O(k^2),  \label{prop B bounds}
\end{align}
proofs of which are now textbook examples of the probabilistic method \cite{alon-spencer}. Improving either of these bounds would be of immense interest.

The $2$-colorability problem for hypergraphs has inspired a great amount of research over the last half century, with many deep results proved and methods developed. One outstanding example is the Lov\'asz local lemma, originally employed by Erd\H{o}s and Lov\'asz~\cite{EL:75} to show that a $k$-uniform hypergraph is $2$-colorable if every edge intersects at most $2^{k-3}$ other edges.
In addition to the Lov\'asz local lemma, the seminal paper of Erd\H{o}s and Lov\'asz~\cite{EL:75} from 1973 left behind a whole legacy of problems and results on the $2$-colorability problem.
Some of their problems were solved relatively soon~\cite{beck:77,beck:78}, others took decades~\cite{FOT:96,kahn:92,kahn:94} or are still the subject of ongoing research.

At the heart of some particularly notorious problems are intersecting hypergraphs.
In an \defn{intersecting} hypergraph (Erd\H{o}s and Lov\'asz called them \emph{cliques}), any two edges intersect in at least one vertex.
The study of intersecting families is a rich topic in itself, which has brought forth many important results such as the Erd\H{o}s--Ko--Rado theorem. We refer the interested reader to~\cite{FT:16}.
With regards to colorability, the intersecting property imposes strong restrictions. 
For instance, it is easy to see that any intersecting hypergraph has chromatic number at most~$3$. Hence, the $3$-chromatic ones are exactly those which do not have property~B. On the other hand, every $3$-chromatic intersecting hypergraph is ``critical'' in the sense that deleting just one edge makes it $2$-colorable. These and other reasons (explained below) make the $2$-colorability problem for intersecting hypergraphs very interesting. It motivated Erd\H{o}s and Lov\'asz to initiate the study of 
$3$-chromatic intersecting hypergraphs, proving some fundamental results and raising tantalizing questions.

Analogously to $m(k)$, define $\tilde{m}(k)$ as the minimum number of edges in a $k$-uniform intersecting hypergraph which is not $2$-colorable. The problem of estimating $\tilde{m}(k)$ seems much harder. While for non-intersecting hypergraphs, we know at least that $\lim_{k\to \infty}\sqrt[k]{m(k)}=2$, no such result is in sight for $\tilde{m}(k)$. Clearly, the lower bound in~\eqref{prop B bounds} also holds for $\tilde{m}(k)$. However, the best known upper bound for $\tilde{m}(k)$ is exponentially worse. For any $k$ which is a power of $3$, an iterative construction based on the Fano plane yields a $k$-uniform $3$-chromatic intersecting hypergraph with $7^{\frac{k-1}{2}}$ edges (see~\cite{AM:64,EL:75}). Perhaps the main obstacle to improving this bound is that the probabilistic method does not seem applicable for intersecting hypergraphs.
Erd\H{o}s and Lov\'asz also asked for the minimum number of edges in a $k$-uniform intersecting hypergraph with cover number~$k$, which can be viewed as a relaxation of $\tilde{m}(k)$ since any $k$-uniform $3$-chromatic intersecting hypergraph has cover number~$k$. This problem was famously solved by Kahn~\cite{kahn:94}.

In addition to the size of $3$-chromatic intersecting hypergraphs, Erd\H{o}s and Lov\'asz also studied their ``intersection spectrum''. For a hypergraph $H$, define $I(H)$ as the set of all intersection sizes $|E\cap F|$ of distinct edges $E,F\in E(H)$.
A folklore observation is that if a hypergraph is not $2$-colorable, then there must be two edges which intersect in exactly one vertex, that is, $1\in I(H)$. A very natural question is what else we can say about the intersection spectrum of a non-$2$-colorable hypergraph.
In general, hypergraphs can be non-$2$-colorable even if their only intersection sizes are $0$ and~$1$. There are various basic constructions for this, see e.g.~\cite{lovasz:79}.
For instance, consider $K_N^{k-1}$, the complete $(k-1)$-uniform hypergraph on $N$ vertices. For $N$ large enough, any $2$-coloring of the edges will contain a monochromatic clique on $k$ vertices by Ramsey's theorem. Let $H$ be the hypergraph with $V(H)=E(K_N^{k-1})$ whose edges correspond to the $k$-cliques of $K_N^{k-1}$. Then $H$ is a $k$-uniform non-$2$-colorable hypergraph with $I(H)= \Set{0,1}$.

Erd\H{o}s and Lov\'asz observed that the situation changes drastically for intersecting hypergraphs. 
In the aforementioned construction of the iterated Fano plane, the intersection spectrum consists of all odd numbers (between $1$ and $k-1$). In particular, the \defn{maximal intersection size} is $k-2$, and the \defn{number of intersection sizes} is $(k-1)/2$. Astonishingly, not a single example (of a $k$-uniform $3$-chromatic intersecting hypergraph) is known where these quantities are any smaller.
Intrigued by this, Erd\H{o}s and Lov\'asz studied the corresponding lower bounds. Concerning the maximal intersection size, they (and also Shelah) could prove that $\max I(H)= \Omega(k/\log k)$ for any $k$-uniform $3$-chromatic intersecting hypergraph~$H$. This is in stark contrast to non-intersecting hypergraphs where we can have $\max I(H)=1$ as discussed. In fact, Erd\H{o}s and Lov\'asz conjectured that a linear bound should hold, or perhaps even $k-O(1)$. Erd\H{o}s~\cite{erdos:94} later offered \$100 for settling this question.

Finally, consider the number of intersection sizes. As already noted, we always have $1\in I(H)$. Moreover, the above result on $\max I(H)$ adds another intersection size for sufficiently large~$k$. Hence, $3$-chromatic intersecting hypergraphs have a small intersection size, namely~$1$, and a relatively big intersection size. Recall that general non-$2$-colorable hypergraphs might only have two intersection sizes. However, Erd\H{o}s and Lov\'asz were able to show that intersecting hypergraphs must have at least one more. Using a theorem of Deza~\cite{deza:74} on sunflowers, they proved that $i(k)\ge 3$ for sufficiently large~$k$, where $i(k)$ is the minimum of $|I(H)|$ over all $k$-uniform $3$-chromatic intersecting hypergraphs. They also remarked that they ``cannot even prove'' that $i(k)$ tends to infinity. This is particularly striking in view of the best known upper bound being $(k-1)/2$.

\begin{conj}[Erd\H{o}s and Lov\'asz, 1973] \label{conj:EL:75}
$i(k)\to \infty$ as $k\to \infty$.
\end{conj}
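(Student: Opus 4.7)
The plan is to prove that $i(k)\to\infty$ (in fact the quantitative form $i(k)\ge k^{1/2-o(1)}$ claimed in the abstract) by establishing a density increment lemma and iterating it, with Ramsey-type concentration of intersection sizes supplying the hypothesis of the increment at each stage. Suppose $H$ is a $k$-uniform $3$-chromatic intersecting hypergraph with $|I(H)|=t$; the aim is to force a bound of the form $k\le f(t)$. Each iteration will descend to a smaller $3$-chromatic intersecting hypergraph on fewer vertices and with a strictly smaller intersection spectrum, so that after at most $t$ stages the spectrum becomes too small to support a non-$2$-colourable hypergraph (recall that $1\in I(H)$ is always forced).

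For the Ramsey step, view the edges of $H$ as vertices of a complete graph whose edges are coloured by intersection size, with at most $t$ colours. A hypergraph Ramsey / iterated pigeonhole argument extracts a sub-family $\mathcal{F}\subseteq E(H)$ of any prescribed size whose members pairwise intersect in a common value~$s$; the cost is at most a tower-type blow-up in the number of edges required, but this is affordable because one only seeks a polynomial bound in $t$. Taking $|\mathcal{F}|>k^{2}-k+1$, the sunflower theorem of Deza---already exploited by Erd\H{o}s and Lov\'asz in their $i(k)\ge 3$ result---upgrades $\mathcal{F}$ to a sunflower with a common core $C$ of size~$s$ and pairwise disjoint petals. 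Because petals are pairwise disjoint and every edge of $H$ outside $\mathcal{F}$ must hit each of them (intersecting property), choosing $|\mathcal{F}|>k$ forces every such edge to meet $C$, so $C$ is in particular a cover of $H$.

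The density increment then extracts a hypergraph $H'$ on $V(H)\setminus C$ whose edges are a sub-collection of the traces outside $C$ of edges of $H$ meeting $C$ in a controlled amount. One arranges that $H'$ is $(k-s)$-uniform, intersecting, and has $|I(H')|\le t-1$ (in particular, the value~$s$ is eliminated because the petals are disjoint outside $C$). Crucially $H'$ must still be $3$-chromatic: the sub-collection is chosen so that any proper $2$-colouring of $H'$ would combine with a suitable $2$-colouring of $C$ to properly $2$-colour all of $H$, contradicting $3$-chromaticity. Iterating $t$ times removes the full spectrum; tracking the total loss in uniformity $\sum s_i\le k$ against the $t$ iterations, together with the Ramsey blow-ups, yields the claimed polynomial relation between $k$ and $t$ and hence the conjecture.

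The main obstacle, and the reason the abstract speaks of a ``delicate interplay'', is preserving $3$-chromaticity under the restriction. A naive projection onto $V(H)\setminus C$ generally fails, because one has enormous flexibility in $2$-colouring the core $C$. Overcoming this requires choosing the sub-family $\mathcal{F}$ rich enough that the petals, together with the remaining edges of $H$ meeting $C$, force every candidate $2$-colouring of $C$ to leave some edge of $H$ monochromatic unless $H'$ itself is $3$-chromatic. Calibrating the size of $\mathcal{F}$ demanded by this richness against the Ramsey cost of producing it, and against the decrement $t\mapsto t-1$ of each iteration, is what forces the square-root exponent and constitutes the technical heart of the argument.
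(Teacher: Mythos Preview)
Your plan has a fatal quantitative gap at the very first step. To apply Deza's sunflower theorem you need a monochromatic clique of size exceeding $k^2-k+1$ in the intersection colouring of $E(H)$ with $t$ colours. The multicolour Ramsey number for this is of order $t^{tk^2}$, while all you know is $|E(H)|\ge 2^{k-1}$; these are incompatible for any $t\ge 2$. The paper makes exactly this point in Section~2.2: ``This would lead to a contradiction if there were about $k^{2}$ such edges \ldots\ Unfortunately, in an arbitrary coloring we cannot find this many, even if we assume $|I(H)|=2$.'' So the Ramsey--Deza opening move is simply unavailable, and without the sunflower you have no core $C$, no cover, and no descent hypergraph~$H'$.

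Even setting this aside, the descent you describe is not well-defined. Edges of $H$ meet the core $C$ in varying amounts (the cover argument only gives $|E\cap C|\ge 1$), so the traces $E\setminus C$ are not $(k-s)$-uniform. If instead you keep only edges containing all of $C$, then the petals themselves witness that $H'$ is not intersecting (their traces are pairwise disjoint by the sunflower property), and you have given no mechanism for preserving $3$-chromaticity against the $2^{s}$ colourings of~$C$. You acknowledge this last point as ``the main obstacle'' but offer no concrete resolution; the paper never confronts it because it never passes to a restricted hypergraph.

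The paper's actual argument is structurally different. It never leaves $H$: the density increment is over an index $i$ into the ordered spectrum $\lambda_1<\cdots<\lambda_r$, and one maintains a large subset $A\subseteq E(H)$ in which few $t$-subsets have all pairwise intersections below~$\lambda_i$. The step $i\mapsto i+1$ is driven not by Deza but by (i) an averaging inequality (Lemma~5) comparing $\lambda_S,\lambda_T,\lambda_{S,T}$, which penalises the existence of many common vertices in $S$ but not~$T$; (ii) a greedy ``common-vertex boosting'' step (Proposition~6) that uses $3$-chromaticity and intersectingness directly on $H$; and (iii) dependent random choice (or a refined Ramsey argument) to produce the pair $(X,Y)$ with controlled cross-intersections. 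The $2^{k-1}$ edges are spent across roughly $r$ increments each costing a factor $k^{O(\sqrt{k})}$, which is what yields $r\ge k^{1/2-o(1)}$.
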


Despite the fact that over the years this problem has been reiterated many times by Erd\H{o}s and other researchers~\cite{erdos:79,erdos:94,erdos:94b,CG:98,RC:20},
remarkably, the lower bound of three intersection sizes has withstood any improvement until now.
In this paper, we prove Conjecture~\ref{conj:EL:75} in the following strong form.

\begin{theorem} \label{thm:main}
The intersection spectrum of a $k$-uniform $3$-chromatic intersecting hypergraph has size at least $\Omega(k^{1/2}/\log k)$.
\end{theorem}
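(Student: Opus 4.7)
The plan is to argue by contradiction. Assume $H$ is a $k$-uniform $3$-chromatic intersecting hypergraph with intersection spectrum $I(H)$ of size $t$; I want to show that if $t$ is too small (say $t = o(k^{1/2}/\log k)$) then $H$ admits a proper $2$-colouring, contradicting $3$-chromaticity. Two ingredients drive the argument and I combine them iteratively.

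\textbf{Ramsey ingredient.} For any sufficiently large sub-family $\mathcal{F}\subseteq E(H)$, colour each unordered pair $\{E,F\}\subseteq\mathcal{F}$ by the value $|E\cap F|\in I(H)$. With only $t$ colours, multicolour Ramsey produces a monochromatic sub-family $\mathcal{F}'$ of any desired polynomial size in $k$, as soon as $|\mathcal{F}|\ge k^{Ct}$ for an absolute constant $C$. Once $|\mathcal{F}'|\ge k^{2}+1$, Deza's theorem (the same tool used in the original Erd\H os--Lov\'asz bound $i(k)\ge 3$) upgrades $\mathcal{F}'$ to a genuine sunflower with a kernel $K$ of size $\lambda\in I(H)$.

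\textbf{Density-increment ingredient.} Given a sunflower $(\mathcal{F}',K)$ with $|\mathcal{F}'|>k$, the intersecting property of $H$ forces every other edge to interact with $K$: an edge disjoint from $K$ would have to meet each of the $>k$ pairwise disjoint petals, which is impossible for a $k$-set. Exploiting this rigidity, one passes to a carefully chosen residual hypergraph (for instance, a trace hypergraph on $V(H)\setminus K$ obtained from the edges containing $K$, appropriately modified to absorb the edges crossing a proper subset of $K$) and argues that this residual hypergraph (a) inherits the intersecting property together with a controlled failure of $2$-colourability, and (b) has strictly fewer intersection sizes available, with uniformity reduced by at least $\lambda$, because what used to be an intersection of size $\lambda$ in $H$ becomes an empty intersection in the residual, removing $\lambda$ from its spectrum.

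\textbf{Iteration and the $1/2$ exponent.} Iterating produces a sequence of kernel sizes $\lambda_1,\lambda_2,\dots$. The key accounting is that the $\lambda_i$ are distinct elements of $I(H)$ and that $\sum_i\lambda_i\le k$ (each round lowers the uniformity by $\lambda_i$). Since the number of distinct positive integers summing to at most $k$ is $O(k^{1/2})$, the iteration terminates after $O(k^{1/2})$ rounds, giving the same bound on $t$. The $\log k$ loss enters when one has to \emph{seed} the very first Ramsey call with a family of size at least $k^{Ct}$: this is obtained by combining the Erd\H os--Lov\'asz--Shelah bound $\max I(H)\ge\Omega(k/\log k)$ with a Cauchy--Schwarz/trace argument on a fixed edge, producing a dense pool of edges sharing a common large intersection with a distinguished edge at the cost of a single $\log k$ factor.

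\textbf{Main obstacle.} I expect the hardest part to be the density-increment step: setting it up so that the residual hypergraph simultaneously (i) remains intersecting, (ii) remains $3$-chromatic --- equivalently, so that any $2$-colouring of the residual extends to a $2$-colouring of $H$ via the rigid sunflower structure --- and (iii) strictly loses at least one intersection size. A naive restriction breaks any one of these: an intersection of size $\lambda$ in $H$ can survive outside $K$ among edges that already met heavily inside $K$; new small intersections can be created when kernels are peeled away; and edges outside $K$ need not meet edges inside $K$ in the residual. Orchestrating all three simultaneously is the ``delicate interplay'' of Ramsey and density increment promised by the abstract, and this is where the proof must do its hardest work.
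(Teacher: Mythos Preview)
Your accounting for the $\sqrt{k}$ exponent points the wrong way. The observation that distinct positive integers summing to at most $k$ number $O(\sqrt{k})$ is an \emph{upper} bound on how many rounds your iteration can survive before the uniformity is exhausted; it does not force $|I(H)|$ to be large. Read literally, your scheme says: if the density-increment step worked as stated (residual stays $3$-chromatic intersecting, strictly loses one intersection size, and $2$-colourings lift), then after at most $O(\sqrt{k})$ rounds one would $2$-colour \emph{every} $k$-uniform $3$-chromatic intersecting hypergraph, irrespective of $|I(H)|$ --- absurd on its face. So (i)--(iii) cannot hold simultaneously; the obstacle you flag is not merely delicate but fatal in this formulation. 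Concretely, (iii) fails: peeling a kernel $K$ of size $\lambda$ sends an old intersection of size $\mu$ to one of some size in $[\mu-\lambda,\mu]$, so the residual spectrum is not contained in $I(H)$, no size is genuinely removed, and there is then no reason the kernel sizes across rounds should be distinct elements of $I(H)$ either.

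The paper's route is structurally different: it never reduces the uniformity and never invokes sunflowers or Deza. It fixes a parameter $s\approx\sqrt{k}$ (playing the role of your Ramsey clique size, \emph{not} $|I(H)|$) and runs a density increment on the \emph{index} $i$ of the sizes $\lambda_1<\cdots<\lambda_r$: from a large subfamily of $E(H)$ in which most $s$-tuples contain a pair intersecting in $\ge\lambda_i$, it manufactures one (smaller by a factor $k^{O(s)}$) for $\lambda_{i+1}$. The engine is a Cauchy--Schwarz averaging inequality, $\tfrac12(\lambda_S+\lambda_T)\ge \lambda_{S,T}+\tfrac{x}{2}-\tfrac{k}{s-1}$ for two $s$-sets of edges $S,T$ with $x$ vertices lying in every edge of $S$ and no edge of $T$, combined with dependent random choice and a greedy pigeonhole using that every set of fewer than $k$ vertices misses some edge. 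Choosing $s\approx\sqrt{k}$ balances the error $k/s$ in the inequality against the per-step loss $k^{O(s)}$; starting from $|E(H)|\ge 2^{k-1}$ one can climb $\Omega(\sqrt{k}/\log k)$ levels, hence that many intersection sizes. The $\sqrt{k}$ thus comes from an analytic balance, not from a sum-of-distinct-integers constraint.
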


\section{Intersection spectrum of $3$-chromatic intersecting hypergraphs}

In this section we will prove \Cref{thm:main}. We begin by gathering several properties of our hypergraphs that we are going to use. 

\subsection{Preliminaries}
The following is a classical result of Erd\H{o}s~\cite{erdos:63}, already mentioned in the introduction.

\begin{theorem}\label{thm:erdos prob}
	Any non $2$-colorable, $k$-uniform hypergraph has at least $2^{k-1}$ edges.
\end{theorem}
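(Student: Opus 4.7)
The plan is to prove the contrapositive via the probabilistic method: I will show that any $k$-uniform hypergraph $H$ with fewer than $2^{k-1}$ edges admits a proper $2$-coloring, and hence cannot be non-$2$-colorable.

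First I would assign each vertex $v \in V(H)$ a color in $\{\text{red}, \text{blue}\}$ independently and uniformly at random. For any fixed edge $E \in E(H)$, let $A_E$ denote the event that $E$ is monochromatic. Since $|E|=k$ and the vertex colors are independent, $\prob{A_E} = 2 \cdot 2^{-k} = 2^{1-k}$, the factor of $2$ accounting for the two possible monochromatic colors.

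Next, I would apply linearity of expectation (or equivalently, the union bound) to the number $X = \sum_{E \in E(H)} \IND_{A_E}$ of monochromatic edges. If $|E(H)| < 2^{k-1}$, then
\[
\expn{X} = \sum_{E \in E(H)} \prob{A_E} = |E(H)| \cdot 2^{1-k} < 1.
\]
Hence there exists at least one outcome of the random colouring in which $X = 0$, i.e.\ no edge is monochromatic, which by definition means $H$ is $2$-colourable.

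The argument has no real obstacle; the only subtlety is making sure that one is using the union bound in the correct direction (counting bad events rather than good ones) and observing that "$\expn{X}<1$" forces the existence of an outcome with $X=0$ since $X$ is integer-valued and non-negative. Taking the contrapositive of what we proved yields the claimed bound $|E(H)| \geq 2^{k-1}$ for any non-$2$-colourable $k$-uniform hypergraph.
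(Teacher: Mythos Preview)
Your proof is correct and matches the paper's own argument: the paper cites this as a classical result of Erd\H{o}s and sketches exactly this probabilistic method proof in the introduction (random $2$-coloring, expected number of monochromatic edges less than~$1$, hence a proper coloring exists).
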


This result provides us with a large number of edges among which we can look for different intersection sizes. The following simple result (inspired by Problem 13.16 in \cite{lovasz:79}) will give rise to certain restrictions on the distribution of intersection sizes across our hypergraph, which in turn will enable us to use a density increment argument. 

\begin{prop}\label{prop:CS}
	Let $\cA$ be a $k$-uniform and $\cB$ a $k'$-uniform hypergraph on the same vertex set and with the same number of edges $\ell$. Then
	$$\sum_{\Set{A,A'}\In  E(\cA)} |A\cap A'| +  \sum_{\Set{B,B'}\In  E(\cB)} |B\cap B'| \ge \sum_{A\in E(\cA),B\in E(\cB)} |A\cap B| - \ell(k+k')/2.$$
\end{prop}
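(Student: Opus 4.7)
The plan is a standard double-counting by vertex degrees, followed by a one-line convexity inequality.

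For each vertex $v\in V(\cA)=V(\cB)$, let $a_v$ denote its degree in $\cA$ (i.e.\ the number of edges of $\cA$ containing $v$) and let $b_v$ denote its degree in $\cB$. I would first rewrite each of the three sums in the statement by switching the order of summation, counting each pair of edges through the vertices of their intersection. This yields the identities
\begin{align*}
\sum_{\{A,A'\}\subseteq E(\cA)}|A\cap A'| &= \sum_v \binom{a_v}{2}, \qquad
\sum_{\{B,B'\}\subseteq E(\cB)}|B\cap B'| = \sum_v \binom{b_v}{2}, \\
\sum_{A\in E(\cA), B\in E(\cB)}|A\cap B| &= \sum_v a_v b_v.
\end{align*}

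Since $\cA$ is $k$-uniform with $\ell$ edges, $\sum_v a_v = k\ell$, and likewise $\sum_v b_v = k'\ell$. Expanding $\binom{a_v}{2} = \tfrac12(a_v^2 - a_v)$ and $\binom{b_v}{2} = \tfrac12(b_v^2-b_v)$, these linear terms contribute exactly $-\tfrac{k\ell+k'\ell}{2}$, which is precisely the additive correction $-\ell(k+k')/2$ on the right-hand side of the inequality. After moving this term across, the claim reduces to the pointwise inequality
$$\sum_v \tfrac{a_v^2+b_v^2}{2} \;\ge\; \sum_v a_v b_v,$$
which is equivalent to $\sum_v (a_v-b_v)^2 \ge 0$ and is therefore trivially true.

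There is no real obstacle in this argument; it is purely a bookkeeping exercise combined with AM–GM. The only thing to be a bit careful about is correctly matching the $\binom{a_v}{2}$ ``minus linear'' terms against the $\ell(k+k')/2$ correction so that the desired inequality becomes the pure square $\sum_v(a_v-b_v)^2\ge 0$. Conceptually, the proposition says that the ``intra-family'' codegree sums cannot both be simultaneously much smaller than the ``cross'' codegree sum, which is exactly the kind of self-correlation vs.\ cross-correlation trade-off that drives a density increment step later in the paper.
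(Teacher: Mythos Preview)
Your proof is correct and essentially identical to the paper's own argument: both proceed by double-counting via vertex degrees $a_v,b_v$, rewrite the three sums as $\sum_v\binom{a_v}{2}$, $\sum_v\binom{b_v}{2}$, $\sum_v a_vb_v$, absorb the linear parts of the binomials into the $\ell(k+k')/2$ term using $\sum_v a_v=k\ell$ and $\sum_v b_v=k'\ell$, and finish with $a_v^2+b_v^2\ge 2a_vb_v$.
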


\begin{proof}
	For any vertex $x$ let us denote by $a_x$ and $b_x$ the number of edges in $\cA$ and $\cB$ that contain $x$, respectively. Each $x$ will be a common vertex of exactly $\binom{a_x}{2},\binom{b_x}{2}$ and $a_xb_x$ pairs of edges both in $\cA$, both in $\cB$ and one each in $\cA, \cB$, respectively. Observe further that $\sum_x a_x = \ell k$ and $\sum_x b_x = \ell k'$. Putting these observations together, the desired inequality is equivalent to 
	$$\sum_x \binom{a_x}{2}+ \sum_x \binom{b_x}{2} \ge \sum_x a_xb_x-\sum_x (a_x+b_x)/2.$$
	Since $a_x^2+b_x^2\ge 2a_xb_x$, this completes the proof.
\end{proof}

It will be convenient for us to introduce the following averaging functions.
Given a hypergraph $H$ and disjoint subsets $S,T \subseteq E(H)$ we define $$\lambda_S:= \frac{1}{\binom{|S|}{2}}\sum_{\{e,f\}\subseteq S} |e \cap f| \quad \mbox{and} \quad \lambda_{S,T}:= \frac{1}{|S||T|} \sum_{e \in S, f \in T} |e \cap f|.$$
The following lemma encapsulates the aforementioned restrictions on the distribution of intersection sizes across a $k$-uniform hypergraph.
Roughly speaking, it says that given two disjoint subsets of edges, on average, the intersection sizes inside the sets are at least as big as across. Moreover, a crucial ingredient of our density increment approach is that we can obtain a stronger inequality if we can find some vertices that exclusively belong to the edges of one set.

\begin{lemma}\label{lem:average-lambda}
Let $S,T$ be disjoint collections of $\ell$ edges of a $k$-uniform hypergraph $H$, with the property that there are $x$ vertices of $H$ which all belong to every edge in $S$ and none of them belong to any edge in~$T$. Then  $$\frac{\lambda_S+\lambda_T}2 \ge \lambda_{S,T}+\frac x2-\frac{k}{\ell-1}.$$
\end{lemma}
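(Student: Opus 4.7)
The plan is to refine the proof of Proposition~\ref{prop:CS} by carefully exploiting the slack in Cauchy--Schwarz at the $x$ special vertices, which is precisely where that inequality is far from tight. For every vertex $v$, let $a_v$ (resp.\ $b_v$) denote the number of edges of $S$ (resp.\ $T$) containing $v$, so that the three sums $\sum_{\{e,f\}\subseteq S}|e\cap f|$, $\sum_{\{e,f\}\subseteq T}|e\cap f|$ and $\sum_{e\in S,f\in T}|e\cap f|$ equal $\sum_v\binom{a_v}{2}$, $\sum_v\binom{b_v}{2}$ and $\sum_v a_vb_v$ respectively. Writing $X$ for the set of $x$ special vertices, we have $a_v=\ell$ and $b_v=0$ on $X$, while $\sum_v a_v=\sum_v b_v=\ell k$.

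The key step is to apply $a_v^2+b_v^2\ge 2a_vb_v$ only to vertices $v\notin X$ (which is where it is needed), and to compute the contribution of $X$ exactly. For $v\notin X$ the usual calculation gives
$$\sum_{v\notin X}\Bigl(\binom{a_v}{2}+\binom{b_v}{2}\Bigr)\ \ge\ \sum_{v\notin X}a_vb_v-\tfrac12\sum_{v\notin X}(a_v+b_v),$$
and here I would use that $\sum_{v\notin X}a_v=\ell(k-x)$, $\sum_{v\notin X}b_v=\ell k$ and $\sum_{v\notin X}a_vb_v=\sum_v a_vb_v$ (since $b_v=0$ on $X$). Adding the exact $X$-contribution $x\binom{\ell}{2}$ back into the $S$-side yields
$$\sum_{\{e,f\}\subseteq S}|e\cap f|+\sum_{\{e,f\}\subseteq T}|e\cap f|\ \ge\ \sum_{e\in S,f\in T}|e\cap f|+x\binom{\ell}{2}+\tfrac{\ell x}{2}-\ell k.$$

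Finally, dividing by $\ell(\ell-1)$ translates the left hand side into $\tfrac{\lambda_S+\lambda_T}{2}$; the cross-sum becomes $\tfrac{\ell}{\ell-1}\lambda_{S,T}\ge\lambda_{S,T}$, the combined $x$-terms $x\binom{\ell}{2}+\tfrac{\ell x}{2}=\tfrac{\ell^2 x}{2}$ simplify to $\tfrac{\ell x}{2(\ell-1)}\ge\tfrac{x}{2}$, and the $-\ell k$ term becomes $-\tfrac{k}{\ell-1}$, producing exactly the claimed inequality. There is no real obstacle here beyond careful bookkeeping; the only subtle point is that one must exploit the asymmetry $\sum_{v\notin X}a_v=\ell(k-x)\neq \ell k=\sum_{v\notin X}b_v$ rather than sloppily replacing both by $\ell k$, since the resulting $\tfrac{\ell x}{2}$ is exactly what combines with $x\binom{\ell}{2}$ to give the $\tfrac{x}{2}$ bonus that makes the lemma useful for a density increment.
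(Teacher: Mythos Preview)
Your proof is correct and is essentially the same argument as the paper's, just unpacked at the vertex level. The paper obtains the identical intermediate inequality
\[
\sum_{\{e,f\}\subseteq S}|e\cap f|+\sum_{\{e,f\}\subseteq T}|e\cap f|\ \ge\ \sum_{e\in S,f\in T}|e\cap f|+x\binom{\ell}{2}+\frac{\ell x}{2}-\ell k
\]
by a slightly slicker packaging: instead of splitting the vertex sum into $v\in X$ and $v\notin X$ and redoing the Cauchy--Schwarz step, it simply deletes the $x$ common vertices from every edge of $S$ to obtain a $(k-x)$-uniform family $\cA$, and then applies Proposition~\ref{prop:CS} as a black box to $\cA$ and $\cB=T$ (noting that pairwise intersections within $S$ drop by exactly $x$, while those within $T$ and across are unchanged). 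Your version has the virtue of making explicit \emph{where} the slack in the Cauchy--Schwarz inequality lives, namely at the vertices of $X$ where $(a_v,b_v)=(\ell,0)$; the paper's version has the virtue of not repeating the degree-counting argument. The final normalization step and the two throwaway inequalities $\tfrac{\ell}{\ell-1}\lambda_{S,T}\ge\lambda_{S,T}$ and $\tfrac{\ell x}{2(\ell-1)}\ge\tfrac{x}{2}$ are identical in both.
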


\begin{proof}
Let $\mathcal{A}$ be the $(k-x)$-uniform hypergraph with edge set consisting of the edges in $S$ with their $x$ common vertices disjoint from any edge in $T$ removed. Let $\mathcal{B}$ be the $k$-uniform hypergraph consisting of the edges in~$T$. Since the intersection size drops by $x$ for pairs of edges in $S$ and it stays the same for pairs in $T$ or one in $S$ and one in $T$, \Cref{prop:CS} gives
\begin{align*} 
\sum_{\{e,f\}\subseteq S} (|e \cap f|-x)+ \sum_{\{e,f\}\subseteq T} |e \cap f| &\ge \sum_{e\in S, f\in T} |e \cap f|  -\ell(2k-x)/2.
\end{align*}
Upon dividing by $\binom{\ell}{2}$ we obtain ${\lambda_S+\lambda_T}  \ge x+\frac{2\ell}{\ell-1}\lambda_{S,T}-\frac{2k-x}{\ell-1}\ge x+2\lambda_{S,T}-\frac{2k}{\ell-1}$.
\end{proof}

The following useful fact, which exploits the property of being 3-chromatic and intersecting, imposes further restrictions on the distribution of intersection sizes. In particular, it allows us to find many edges with large common intersection. 

\begin{prop}\label{prop:greedy increase}
Let $H$ be a $k$-uniform $3$-chromatic and intersecting hypergraph, and $X\In V(H)$. Then for any $0\le i\le k-|X|$, there exists a set $X_i\In V(H)$ of size $|X|+i$ such that at least a $k^{-i}$ proportion of the edges containing $X$ also contain~$X_i$.
\end{prop}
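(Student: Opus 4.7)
The plan is to prove the proposition by induction on $i$, building up $X_i$ one vertex at a time along a nested chain $X = X_0 \subseteq X_1 \subseteq \cdots \subseteq X_i$, losing at most a factor of $k$ in the number of edges containing the current set at each step. Writing $S_Y := \{e \in E(H) : Y \subseteq e\}$, the entire argument reduces to the following single-step claim: whenever $|X_j| < k$ and $S_{X_j}$ is nonempty, there exists a vertex $v \notin X_j$ with $|\{f \in S_{X_j} : v \in f\}| \ge |S_{X_j}|/k$.

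The key ingredient I would invoke is the classical fact that a $k$-uniform $3$-chromatic intersecting hypergraph has cover number $\tau(H)$ exactly equal to $k$. The upper bound $\tau(H) \le k$ is automatic since $H$ is intersecting and so any single edge is a vertex cover; the matching lower bound uses $3$-chromaticity, because a cover $C$ of size at most $k-1$ would yield a proper $2$-coloring by taking $C$ red and $V(H) \setminus C$ blue. Since the induction runs only through $i \le k - |X|$, throughout the process we have $|X_j| \le k - 1 < \tau(H)$, so $X_j$ cannot be a cover and there exists some edge $e \in E(H)$ disjoint from $X_j$.

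Given such an $e$, I would next invoke the intersecting property of $H$ to conclude that $f \cap e \ne \emptyset$ for every $f \in S_{X_j}$; because $X_j \subseteq f$ while $e \cap X_j = \emptyset$, the intersection $f \cap e$ automatically lies outside $X_j$. Summing $|f \cap e| \ge 1$ over $f \in S_{X_j}$ yields
\[ \sum_{v \in e} |\{f \in S_{X_j} : v \in f\}| \;=\; \sum_{f \in S_{X_j}} |f \cap e| \;\ge\; |S_{X_j}|, \]
so by averaging over the $k$ vertices of $e$ some $v \in e$ (in particular $v \notin X_j$) belongs to at least $|S_{X_j}|/k$ edges of $S_{X_j}$. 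Setting $X_{j+1} := X_j \cup \{v\}$ completes one induction step, and iterating $i$ times yields the desired $|S_{X_i}| \ge |S_X|/k^i$.

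The point I expect to be the most subtle is recognizing that the two hypotheses have to be used simultaneously, and that they enter precisely through the identity $\tau(H) = k$. Intersecting alone would already let us average the degrees over the $k$ vertices of any fixed edge of $H$, but without the cover-number lower bound we would have no way to force this edge to avoid $X_j$, so the resulting heavy vertex might lie inside $X_j$ and give no progress. Only the inequality $\tau(H) \ge k$, which is a consequence of $3$-chromaticity, lets us select the covering edge $e$ to be \emph{disjoint} from $X_j$, pushing the averaging entirely outside $X_j$. Beyond this observation the argument is routine double counting, and no further obstacle arises.
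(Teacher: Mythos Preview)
Your proposal is correct and follows essentially the same argument as the paper: induct on $i$, use $3$-chromaticity to find an edge disjoint from the current set (the paper phrases this directly via the red/blue $2$-coloring rather than through $\tau(H)=k$, but the content is identical), and then average over the $k$ vertices of that edge using the intersecting property to gain the new vertex. The framing through the cover number is a cosmetic difference only.
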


\begin{proof}
We proceed by induction on~$i$. The case $i=0$ is trivial. Suppose now we have $X_i$ with $|X_i|<k$. Since $H$ is $3$-chromatic it must contain an edge $Y$ disjoint from $X_i$, as otherwise we can color vertices in $X_i$ red and all vertices outside blue to obtain a proper $2$-coloring. Since $H$ is intersecting every edge containing $X_i$ intersects~$Y$. By averaging, some vertex $x\in Y$ is contained in at least a $\frac{1}{k}$ proportion of these edges. Thus, adding $x$ to $X_i$ yields the desired $X_{i+1}$.
\end{proof}
 
\noindent \textbf{Remark.} The above proposition does not require the full strength of the $3$-chromatic and intersecting assumptions. In fact we can replace them with the assumption that for any set $X\In V(H)$ of size less than~$k$, there exists a set $Y\In V(H)\sm X$ of size $k$ such that every edge of $H$ intersects~$Y$. Since this proposition and \Cref{thm:erdos prob} are the only places where we will use these assumptions in our proof of \Cref{thm:main}, we note that it actually holds if we only assume that this alternative property holds and that $H$ has at least $2^{k-1}$ edges.

In our proof of \Cref{thm:main}, we will make use of a dependent random choice lemma.
Dependent random choice is a powerful probabilistic technique which has recently led to a number of advances in Ramsey theory, extremal graph theory, additive combinatorics, and combinatorial geometry. Early variants of this technique were developed by Gowers~\cite{gowers:98}, Kostochka and R\"odl~\cite{KR:01} and Sudakov~\cite{sudakov:03}. In many applications, the technique is used to prove the useful fact that every dense graph contains a large subset $U$ in which almost every set of $t$ vertices has many common neighbors. For more information about dependent random choice and its applications, we refer the interested reader to the survey~\cite{FS:11}. We are going to use the following variant of the dependent random choice lemma (Lemma~6.3 in~\cite{FS:11}).

\begin{lemma}\label{lem:dependent}
If $d>0$, $t \leq n$ are positive integers, and $G$ is a graph with $m > 4td^{-t}n$ vertices and at least $d m^2/2$ edges, then there is a vertex subset $U$ with $|U| > 2n$ such that the proportion of $t$-subsets of $U$ with less than $n$ common neighbors in $G$ is less than $(2t)^{-t}$.
\end{lemma}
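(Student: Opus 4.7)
The plan is to apply the standard dependent random choice template. I would pick $T=(v_1,\ldots,v_t)$ uniformly at random in $V(G)^t$ (independently and with repetition), and set $A:=\bigcap_{i=1}^t N(v_i)$. A convexity argument (power mean inequality applied to the degree sequence) yields
\[
\mathbb{E}[|A|] = \sum_{v\in V(G)}\left(\frac{d(v)}{m}\right)^t \geq m\left(\frac{2|E(G)|}{m^2}\right)^t \geq md^t,
\]
and the hypothesis $m>4td^{-t}n$ ensures that this expectation is at least $4tn$.

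Next, let $Y$ count the ``bad'' $t$-subsets of $A$, i.e.\ those whose common neighbourhood in $G$ has size less than $n$. For any fixed $t$-subset $S\subseteq V(G)$ with $|N(S)|<n$, one has $\mathbb{P}[S\subseteq A]=(|N(S)|/m)^t<(n/m)^t$; summing over all such $S$ gives
\[
\mathbb{E}[Y] < \binom{m}{t}\left(\frac{n}{m}\right)^t \leq \frac{n^t}{t!},
\]
and Jensen's inequality applied to the concave map $y\mapsto y^{1/t}$ converts this to $\mathbb{E}[Y^{1/t}]\leq \mathbb{E}[Y]^{1/t}<n/(t!)^{1/t}$.

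To combine these, I would study the random variable $W:=|A|-c\,Y^{1/t}$ with $c:=4t\,(t!)^{1/t}$. By the two bounds above, $\mathbb{E}[W]\geq md^t-4tn>0$, so some realisation of $T$ satisfies $|A|>c\,Y^{1/t}$. For such an outcome, provided $|A|\geq 2t$, the elementary inequality $\binom{|A|}{t}\geq (|A|/2)^t/t!$ turns $|A|>c\,Y^{1/t}$ into $Y<|A|^t/((4t)^t t!)\leq (2t)^{-t}\binom{|A|}{t}$, yielding the required proportion bound. Taking $U:=A$ and arguing $|U|>2n$ by a parallel split of $\mathbb{E}[|A|]$ into the events $\{|A|\leq 2n\}$ and $\{|A|>2n\}$ then completes the proof.

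The main obstacle is this last balancing step: one must find a single $T$ for which $|A|>2n$ \emph{and} the proportion of bad $t$-subsets in $A$ is less than $(2t)^{-t}$. This forces one to split the contribution of $|A|$ on the failure event $\{Y\geq (2t)^{-t}\binom{|A|}{t}\}$ into the regimes $|A|<2t$ and $|A|\geq 2t$, and to play the bounds on $\mathbb{E}[|A|]$ and $\mathbb{E}[Y^{1/t}]$ against each other. The precise constants in the hypothesis $m>4td^{-t}n$ and in the target bound $(2t)^{-t}$ are calibrated exactly so that this accounting closes; beyond this bookkeeping, the proof is a routine instance of dependent random choice.
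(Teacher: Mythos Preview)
The paper does not prove this lemma; it is quoted as Lemma~6.3 of the Fox--Sudakov survey~\cite{FS:11} on dependent random choice and invoked as a black box. Your outline is exactly the standard argument behind that lemma: the two expectation estimates $\mathbb{E}[|A|]\ge md^t>4tn$ and $\mathbb{E}[Y^{1/t}]\le (\mathbb{E}[Y])^{1/t}<n/(t!)^{1/t}$ are correct, and you are right that the only genuine issue is the final balancing step guaranteeing a single outcome with both $|A|>2n$ and the proportion bound. One small caution on that bookkeeping: with the inequality $\binom{|A|}{t}\ge (|A|/2)^t/t!$ that you quote, the accounting is exactly borderline and does not close --- on the failure event it yields $|A|\le 4t(t!)^{1/t}Y^{1/t}$ and hence only $\mathbb{E}[|A|]<2n+4tn$, which does not contradict $\mathbb{E}[|A|]>4tn$. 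Using instead the sharper $\binom{|A|}{t}\ge (|A|-t+1)^t/t!$ gives $|A|\le t-1+2t(t!)^{1/t}Y^{1/t}$ on the failure event, whence $\mathbb{E}[|A|]<2n+t+2tn\le 3n+2tn$, and this does contradict $\mathbb{E}[|A|]>4tn$ for every $t\ge 2$ (the case $t=1$ being immediate). So your assessment that the constants are calibrated to make the accounting close is correct, but only after this minor sharpening.
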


\subsection{Proof ideas}

In this subsection we will illustrate our proof ideas by sketching a slightly simpler argument which shows $|I(H)| \ge k^{1/3-o(1)}$. Our proof of \Cref{thm:main}, which will be presented in the next subsection, follows along very similar lines, with the exception of using dependent random choice in place of certain Ramsey arguments which we use here.

Let $H$ be a $k$-uniform, $3$-chromatic and intersecting hypergraph. Let $ \lambda_1 < \ldots < \lambda_r$ denote the distinct intersection sizes in $H$, so $r=|I(H)|$. A natural way to approach our problem is to define a coloring of the complete graph with vertex set $E(H)$ where an edge is colored according to the size of the intersection of its endpoints. We will refer to this coloring as the \textit{intersection coloring}. \Cref{thm:erdos prob} tells us that this graph is quite big, so if the number of colors $r$ is small we could hope to use Ramsey's theorem to find a number of edges which make a monochromatic clique in the intersection coloring, i.e.\ all pairwise intersection sizes are the same. This would lead to a contradiction if there were about $k^2$ such edges (this is an easy consequence of \cite{deza:74}). Unfortunately, in an arbitrary coloring we cannot find this many, even if we assume $|I(H)|=2$. 

However, the well-known argument for bounding Ramsey numbers actually gives us more than just a monochromatic clique. If we repeatedly take out an arbitrary edge of $H$ and only keep its majority color neighbors, we keep at least a proportion of $1/r$ of the edges per iteration. If we repeat $rt$ many times we can find a set $X$ consisting of $t$ edges that we took out which had the same majority color, so in particular $X$ is a monochromatic clique in the intersection coloring. Furthermore, we know that the size of the set of remaining edges $Y$ has lost at most a factor of $r^{rt}$ compared to the original number of edges. In addition, the complete bipartite graph between $X$ and $Y$ is also monochromatic in the same color as $X$.

In particular, this provides us with a pair $(X,Y)$ of disjoint subsets of edges of $H$ with the property that any two edges in $X$ as well as any pair of edges one in $X$ and one in $Y$ intersect in exactly $\lambda_i$ vertices, for some $\lambda_i$. We call such a pair a $\lambda_i$-pair. Note that if we choose $|X|=t \approx k^{1/3}$ and assume $r\le O(k^{1/3}/\log k)$ (otherwise we are done) then $|Y| \ge |E(H)|/r^{rt} \ge |E(H)|/k^{O(k^{2/3})}.$ 

Our strategy will be to show that given a $\lambda_i$-pair one can find a $\lambda_j$-pair, for some $j>i,$ whose set $X$ still has size $t$ and the size of $Y$ shrinks by at most a factor of $k^{O(k^{2/3})}$. Since by \Cref{thm:erdos prob} we know $|E(H)| \ge 2^{k-1}$, we can repeat this procedure at least $\Omega(k^{1/3}/\log k)$ times to conclude there are at least this many different intersection sizes and complete the proof.

To do this, let $(X,Y)$ be a $\lambda_i$-pair with $|X|=t\approx k^{1/3}$. We can apply \Cref{lem:average-lambda} to $X$ and any $t$-subset $Y'\subseteq Y$ to conclude that $\lambda_{Y'} \ge \lambda_i-2k^{2/3}$. So in particular, the average intersection size in any subset of $Y$ of size at least $t$ cannot be much lower than~$\lambda_i$. Next we take an arbitrary edge $U$ in $X$ and consider the intersections of edges in $Y$ with $U$. We will separate between two cases depending on the structure of $Y$. 

In the first case, many of the edges in $Y$ have almost the same intersection with~$U$. In this case we will find a collection of at least $|Y|/k^{O(k^{2/3})}$ edges in $Y$ which contain the same set of vertices of size at least $\lambda_i-x$, where $x\approx 10k^{2/3}$. Then we apply \Cref{prop:greedy increase} (with $i=x+1$) to obtain a subset of edges of size at least $|Y|/k^{O(k^{2/3})}$ in which any pair of edges intersects in more than $\lambda_i$ vertices. Applying once again the Ramsey argument, this time within this collection of edges, we find a $\lambda_j$-pair in which we only lost another factor of $k^{O(k^{2/3})}$ in terms of size of $Y$. Since all intersection sizes are larger than $\lambda_i$ we know that $j>i$, so we found our desired new pair.  

In the second case, the intersections of edges in $Y$ with $U$ are ``spread out''. Then we can find two disjoint subsets $A,B\In Y$ both of size $|Y|/k^{O(k^{2/3})}$ with the property that there is a set of $x$ vertices $W\subseteq U$ which belongs to every edge of $A$ and is disjoint from all edges in~$B$. By applying the Ramsey argument to the collection $A$ and to the collection $B$ we either find a desired $\lambda_j$-pair with $j>i$ or we find a $t$-subset $S\In A$ and a $t$-subset $T\In B$ such that all pairwise intersections inside $S$ and $T$ have size at most~$\lambda_i$. In particular, $\lambda_S,\lambda_T \le \lambda_i$. We now apply \Cref{lem:average-lambda} to $S$ and $T$, knowing that the $x=10k^{2/3}$ vertices in $W$ belong to every edge in $S$ and none belong to any edge in $T$. This will give us $\lambda_{S,T} \le \lambda_i-4k^{2/3}$. Combining these three inequalities we obtain $\lambda_{S \cup T} < \lambda_i-2k^{2/3}$, which contradicts our lower bound on the average intersection size among subsets of $Y$ and completes the argument.

\noindent \textbf{Remark.} While one can develop the Ramsey type arguments of this section to prove \Cref{thm:main}, in the following subsection we choose to present the argument based on dependent random choice as it demonstrates a slightly different approach, is slightly shorter and we believe has greater potential for further improvement.

\subsection{Proof of Theorem~\ref{thm:main}}

Let $H$ be a $k$-uniform, $3$-chromatic and intersecting hypergraph where we assume throughout the proof that $k$ is sufficiently large. 
Let $ \lambda_1<\ldots < \lambda_r$ denote the distinct intersection sizes in $H$, and set $\lambda_{r+1}=k$. Let us assume for the sake of contradiction that $r <\frac{\sqrt{k}}{51\log_2 k}.$  
Let us set $t=2\lceil \sqrt{k} \rceil$ and for all $1 \le i \le r+1$ 
\begin{equation}\label{eqn:m-i}
    m_i:=\frac{|E(H)|}{k^{25(i-1)t}}\ge 2^{k-1-25(i-1)t\log_2 k} \ge t,
\end{equation} where we used \Cref{thm:erdos prob} in the first inequality and $i-1 \le r < \frac{\sqrt{k}}{51\log_2 k}$ in the second. Our strategy is as follows. We will choose the largest $i$ such that we can find a subset $A\subseteq E(H)$ of size $m_i$ with the property that many pairs of edges in $A$ intersect in at least $\lambda_i$ vertices. We will then find such a subset for a larger $i$, reaching a contradiction. 

Let us first specify what we mean by many pairs above. In order to make use of the dependent random choice lemma we will quantify it in terms of how many $t$-subsets of $A$ consist of edges with all their pairwise intersections being of size smaller than $\lambda_{i}$. We call a set of edges \emph{$\lambda_i$-small} if all their pairwise intersections have size strictly smaller than $\lambda_i$. Let $i$ be the largest index such that there exists a subset $A\subseteq E(H)$ of size at least $m_i$ with the property that at most half of the $t$-subsets of $A$ are $\lambda_i$-small. Observe that since all intersections of edges in $H$ are of size at least $\lambda_1$ there is no set of $t$ edges of $H$ which is $\lambda_1$-small. Hence, by taking $A=E(H)$ we get that $i=1$ satisfies our condition, showing that $i$ exists. 

Our first goal is to show the following claim.

\begin{claim*}
There exists a pair $(X,Y)$ of disjoint subsets of $E(H)$ such that
\begin{enumerate}
    \item $|X|=t$ and $|Y|\ge {m_i}/{k^{3t}},$ 
    \item any two edges in $X$ intersect in at most $\lambda_i$ vertices, 
    \item any edge in $X$ and any edge in $Y$ intersect in at least $\lambda_i$ vertices.  
\end{enumerate}
\end{claim*}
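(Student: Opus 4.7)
The plan is to apply the dependent random choice lemma (\Cref{lem:dependent}) to an auxiliary graph on $A$, and then use the maximality of $i$ to force the pairwise intersections inside $X$ to be small. First I would define a graph $G$ on vertex set $A$ by putting an edge between $e,f\in A$ whenever $|e\cap f|\ge \lambda_i$. Since at most half of the $t$-subsets of $A$ are $\lambda_i$-small, at least half contain some pair of edges with intersection $\ge \lambda_i$, i.e.\ at least one edge of $G$. Double counting (each edge of $G$ lies in $\binom{|A|-2}{t-2}$ many $t$-subsets, and each non-$\lambda_i$-small $t$-subset contributes at least one edge) yields $e(G)\ge \tfrac{|A|(|A|-1)}{2t(t-1)}$, so $G$ has density at least $d:=\tfrac{1}{2t^2}$.

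Next I would apply \Cref{lem:dependent} to $G$ with $m=|A|\ge m_i$, the above $d$, and $n:=\lceil m_i/k^{3t}\rceil$. The hypothesis $m>4td^{-t}n$ reduces to the inequality $k^{3t}> 4t(2t^2)^t$, and since $t=2\lceil\sqrt{k}\rceil$ we have $(2t^2)^t=k^{O(\sqrt{k})}$ while $k^{3t}=k^{\Omega(\sqrt k\log k)}$, so this holds comfortably for large $k$. The lemma produces $U\subseteq A$ with $|U|>2n$ such that all but a $(2t)^{-t}$ fraction of the $t$-subsets of $U$ have at least $n$ common $G$-neighbors. Crucially $|U|>2n\ge 2m_i/k^{3t}\gg m_i/k^{25t}=m_{i+1}$, so the maximality of $i$ applied to $U$ forces more than half of the $t$-subsets of $U$ to be $\lambda_{i+1}$-small, i.e.\ to have all pairwise intersections of size at most $\lambda_i$.

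Since $(2t)^{-t}<1/2$ (for $k$ large), a union bound then delivers a $t$-subset $X\subseteq U$ that is simultaneously $\lambda_{i+1}$-small \emph{and} has at least $n$ common $G$-neighbors. Taking $Y$ to be this set of common neighbors (automatically disjoint from $X$, as $G$ is loopless) gives $|Y|\ge n\ge m_i/k^{3t}$, pairwise intersections inside $X$ are at most $\lambda_i$ by the $\lambda_{i+1}$-small property, and every $f\in Y$ intersects every $x\in X$ in at least $\lambda_i$ vertices by the definition of $G$. The main point to verify is the numerical balancing: the density $d\approx 1/t^2$ costs us a factor $d^{-t}=k^{O(\sqrt k)}$ in the DRC hypothesis, whereas we want a contraction factor of only $k^{3t}=k^{\Omega(\sqrt k\log k)}$ between $|A|$ and $|Y|$. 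The generous gap between $k^{3t}$ and the factor $k^{25t}$ appearing in the definition $m_i=|E(H)|/k^{25(i-1)t}$ is precisely what makes the DRC hypothesis fit and also ensures $|U|\gg m_{i+1}$ so that the maximality step can be invoked.
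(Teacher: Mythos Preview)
Your argument is correct and follows essentially the same approach as the paper: double-count non-$\lambda_i$-small $t$-subsets to get density $\Theta(1/k)$ in the auxiliary graph, apply dependent random choice, and then invoke the maximality of $i$ on the output set $U$ to locate a $\lambda_{i+1}$-small $t$-set $X$ with at least $n$ common neighbours $Y$. One small slip in the numerics: $k^{3t}=k^{\Theta(\sqrt{k})}$, not $k^{\Omega(\sqrt{k}\log k)}$; nevertheless, since $(2t^2)^t=k^{(2+o(1))\sqrt{k}}$ while $k^{3t}=k^{(6+o(1))\sqrt{k}}$, the required inequality $k^{3t}>4t(2t^2)^t$ does hold for large $k$, so the proof stands.
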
 

\begin{proof}
Let $m:=|A| \ge m_i$. By our choice of $A$, there are at least $ \frac{1}{2}\binom{m}{t}$ $t$-subsets of $A$ which contain at least one pair of edges with the intersection size at least $\lambda_i$. Since every pair of edges belongs to at most $\binom{m-2}{t-2}$ $t$-subsets of $A$, there are at least $\frac{1}{2}\binom{m}{t}/\binom{m-2}{t-2}=\frac{1}{t(t-1)} \binom {m}{2}\ge \frac{1}{8k} \frac{m^2}{2}$ pairs of edges in $A$ with intersections of size at least $\lambda_i$. 

We apply the dependent random choice lemma (\Cref{lem:dependent}) to the graph with vertex set $A$ and edge set consisting of pairs of elements in $A$ (so edges of $H$) which intersect in at least $\lambda_i$ vertices. Choosing $d=\frac{1}{8k}$ we may take 
$$n=\frac{m}{5td^{-t}}\ge \frac{m_i}{5t\cdot (8k)^{t}} \ge \frac{m_i}{k^{3t}} \ge m_{i+1} \ge t,$$ 
where in the last two inequalities we used \eqref{eqn:m-i}.

This provides us with a subset $A'\subseteq A$ of size at least $n \ge m_{i+1}$ such that all but an $(2t)^{-t}$ proportion of $t$-subsets of $A'$ have the property that there exist $n$ edges of $H$ each of which intersects every edge in the $t$-subset in at least $\lambda_i$ vertices. Since $|A'| \ge m_{i+1}$, by our maximality assumption on $i$ we know that more than half of the $t$-subsets of $A'$ must be $\lambda_{i+1}$-small, i.e.\ have all pairs of edges intersecting in less than $\lambda_{i+1}$, so at most $\lambda_i$ vertices.  Therefore, there exists a $t$-subset $X \subseteq A'$ which is both $\lambda_{i+1}$-small and there is a set $Y$ consisting of $n$ edges of $H$ with the property that any edge in $X$ intersects any edge in $Y$ in at least $\lambda_i$ vertices. These $X$ and $Y$ satisfy the desired properties.
\end{proof}

Our next step is to analyze average intersections among subsets of $Y$. The following claim tells us that any $t$-subset of $Y$ must have average intersection size almost as big as $\lambda_i$.

\begin{claim*}
For any $Y'\subseteq Y$ of size $t$ we have $\lambda_{Y'} \ge \lambda_i-2\sqrt{k}$.
\end{claim*}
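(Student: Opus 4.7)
The plan is to apply \Cref{lem:average-lambda} directly to $S = X$ and $T = Y'$, taking $\ell = t$ and $x = 0$ (we do not need any common vertices on one side here). Both $X$ and $Y'$ have $t$ edges and are disjoint subsets of $E(H)$, so the hypothesis is satisfied trivially. This yields
\[
\frac{\lambda_X + \lambda_{Y'}}{2} \ge \lambda_{X,Y'} - \frac{k}{t-1}.
\]

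Next I would use the two structural properties of the pair $(X,Y)$ from the previous claim to pin down the terms on the right-hand side. Property~2 says that any two edges in $X$ intersect in at most $\lambda_i$ vertices (since $X$ is $\lambda_{i+1}$-small and the possible intersection sizes strictly below $\lambda_{i+1}$ are at most $\lambda_i$), whence $\lambda_X \le \lambda_i$. Property~3 says that any edge in $X$ meets any edge in $Y \supseteq Y'$ in at least $\lambda_i$ vertices, whence $\lambda_{X,Y'} \ge \lambda_i$. Plugging these into the inequality above gives
\[
\frac{\lambda_i + \lambda_{Y'}}{2} \ge \frac{\lambda_X + \lambda_{Y'}}{2} \ge \lambda_i - \frac{k}{t-1},
\]
so $\lambda_{Y'} \ge \lambda_i - \frac{2k}{t-1}$.

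Finally, I would use the choice $t = 2\lceil \sqrt{k}\rceil$. For $k$ large we have $t - 1 \ge 2\sqrt{k} - 1 \ge \sqrt{k}$, hence $\frac{2k}{t-1} \le 2\sqrt{k}$, giving $\lambda_{Y'} \ge \lambda_i - 2\sqrt{k}$ as required. There is no real obstacle here; the only point to be slightly careful about is interpreting ``$\lambda_{i+1}$-small'' as giving $\lambda_X \le \lambda_i$ (rather than just $< \lambda_{i+1}$), which uses that $\lambda_i$ is the largest intersection size strictly below $\lambda_{i+1}$.
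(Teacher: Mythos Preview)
Your proposal is correct and follows the paper's proof essentially verbatim: apply \Cref{lem:average-lambda} to $X$ and $Y'$ with $x=0$, then use $\lambda_X\le\lambda_i$, $\lambda_{X,Y'}\ge\lambda_i$, and the choice of $t$ to deduce $\lambda_{Y'}\ge\lambda_i-2\sqrt{k}$. Your side remark about interpreting ``$\lambda_{i+1}$-small'' is unnecessary here, since property~2 of the pair $(X,Y)$ already states directly that intersections within $X$ are at most $\lambda_i$.
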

\begin{proof}
\Cref{lem:average-lambda} applied to $X$ and $Y'$ with $x=0$ gives
$$\lambda_{Y'} \ge 2\lambda_{X,Y'}-\lambda_X-\frac{2k}{t-1} \ge \lambda_i-2\sqrt{k},$$
since $\lambda_X \le \lambda_i$, $\lambda_{X,Y'}\ge \lambda_i$ and by definition of~$t$. 
\end{proof}

On the other hand our maximality assumption on $i$ and the fact $Y$ is large allows us to find a subset $Y'$ as in the claim above which is $\lambda_{i+1}$-small, i.e.\ with $\lambda_{Y'}\le \lambda_i$. This does not immediately lead to a contradiction since we lost a little bit in the application of \Cref{lem:average-lambda} above.
Our final step is to show one can find such $Y'$ with an even smaller average intersection size which will give us a contradiction. 

To this end let us fix an edge $U \in X$ and let $(A_1,B_1,X_1),\dots, (A_m,B_m,X_m) $ be a collection of triples with the following properties:
\begin{enumerate}  [label=\textbullet] 
\item $A_1,\dots,A_m,B_1,\dots,B_m$ are distinct edges of $H$.
\item For each $i \in [m]$, $X_i$ is a subset of $U$ of size $x=10t$, and $X_i\In A_i$ but $X_i\cap B_i=\emptyset$.
\item $m$ is maximal subject to the above conditions.
\end{enumerate}

If $m<|Y|/4$ there is a subset $Y' \subseteq Y$ consisting of at least $|Y|/2$ edges with the property that no triple $(A_i,B_i,X_i)$ as above exists inside $Y'$. Let us fix an $A \in Y'$. If we let $A'=A \cap U$ we know $|A'|\ge \lambda_i$. For any edge $B \in Y'$ we know that $|A' \setminus B|<x$, since otherwise we could have extended our family by using $A,B$ and any $x$ vertices in $A' \setminus B$. This condition can be rewritten as $|A' \cap B| > |A'|-x.$ In particular, assuming $\lambda_i \ge x$, there exists a subset $A'' \subseteq A'$ of size $|A'|-x \ge \lambda_i-x$, which belongs to at least
$$\frac{|Y'|}{\binom{|A'|}{|A'|-x}} \ge \frac{|Y|/2}{\binom{k}{x}} \ge \frac{m_i}{k^{x+3t}}=\frac{m_i}{k^{13t}}$$
edges of $H$. If $\lambda_i<x$ we take $A''=\emptyset$. Either way, using \Cref{prop:greedy increase} (with $X=A''$ and $i=x+1$) we obtain a set of vertices of size larger than $\lambda_i$ contained in at least $m_i/k^{x+1+13t}\ge m_i/k^{25t} \ge m_{i+1}$ edges of $H$. Since this means that any two edges in this collection intersect in more than $\lambda_i$ vertices, we conclude that no set of $t$ edges from this collection is $\lambda_{i+1}$-small. This contradicts the maximality of~$i$.

Let us now assume $m \ge |Y|/4$. Since there are $\binom{|U|}{x}$ different choices for $X_i$, there are at least ${m}/{\binom{|U|}{x}} \ge \frac{|Y|}{4}/{\binom{k}{x}}\ge |Y|/k^{x}\ge m_{i+1}$ triples with the same $X_i$, which we denote by $X'$.
This and the maximality of $i$ allow us to find a set $S$ of size $t$, consisting of edges $A_i$ for which $X_i=X'$, which is $\lambda_{i+1}$-small, i.e.\ all pairs intersect in at most $\lambda_{i}$ vertices. Similarly we find a set $T$ of size $t$, consisting of edges $B_i$ for which $X_i=X'$, which is $\lambda_{i+1}$-small. Let us now remove half of the edges from both $S$ and $T$ so that $|S \cup T|=t$. The above claim then tells us that $\lambda_{S \cup T} \ge \lambda_i-2\sqrt{k}$. On the other hand \Cref{lem:average-lambda} applied to $S$ and $T$ with $x=|X'|$ gives $$ \frac{\lambda_S+\lambda_T}{2} \ge \lambda_{S,T}+\frac x2-\frac{k}{t/2-1}\ge  \lambda_{S,T}+8\sqrt{k}.$$ Since $\lambda_S,\lambda_T \le \lambda_i$ we obtain $\lambda_{S,T} \le \lambda_i-8\sqrt{k}$. Combining this with $\lambda_S,\lambda_T \le \lambda_i$ and $$\binom{t/2}{2}\lambda_S+\binom{t/2}{2}\lambda_T+(t/2)^2\lambda_{S,T}=\binom{t}{2}\lambda_{S \cup T}$$ we get $\lambda_{S \cup T} \le \lambda_i - \frac{(t/2)^2}{\binom{t}{2}} \cdot 8\sqrt{k}< \lambda_i-2\sqrt{k}$, which is a contradiction and completes the proof. \qed

\section{Concluding remarks}

We have proved the conjecture of Erd\H{o}s and Lov\'asz in a strong form, by showing that the intersection spectrum of any $k$-uniform $3$-chromatic intersecting hypergraph has size polynomial in $k$. It would be very interesting to improve our result and obtain a linear lower bound on the number of intersection sizes. In particular, this would also improve the result of Erd\H{o}s, Lov\'asz and Shelah on the maximal intersection size. Some of the ideas behind our arguments could be useful in obtaining a better understanding of how the intersection spectrum can look like in general as well.

\providecommand{\bysame}{\leavevmode\hbox to3em{\hrulefill}\thinspace}
\providecommand{\MR}{\relax\ifhmode\unskip\space\fi MR }
\providecommand{\MRhref}[2]{%
  \href{http://www.ams.org/mathscinet-getitem?mr=#1}{#2}
}
\providecommand{\href}[2]{#2}

\end{document}